\newtheorem{thm}{Theorem}[section]
\newtheorem{cor}[thm]{Corollary}
\newtheorem{lem}[thm]{Lemma}
\newtheorem{prop}[thm]{Proposition}
\newtheorem{rem}[thm]{Remark}
\numberwithin{equation}{section}
\begin{document}
\title{On Commuting Graphs of Generalized Dihedral Groups}
\author{Vipul Kakkar and Gopal Singh Rawat}
\address{Department of Mathematics\\ Central University of Rajasthan\\ Ajmer, India}
\email{vplkakkar@gmail.com, gpl.chouhan.5@gmail.com}
\date{}

\begin{abstract}
For a group $G$ and a subset $X$ of $G$, the commuting graph of $X$, denoted by $\Gamma(G,X)$ is the graph whose vertex set is $X$ and any two vertices $u$ and $v$ in $X$ are adjacent if and only if they commute in $G$. In this article, certain properties of the commuting graph of generalized dihedral groups have been studied.
\end{abstract}
\maketitle

\section{Introduction}\footnotetext{This work is a part of M.Sc. Thesis of the second author.}
Throughout the article, we consider the simple undirected graphs which are without loops or multiple edges. For a graph $\Gamma$, the vertex set and edge set are denoted by $V( \Gamma )$ and $E( \Gamma )$. If a vertex $u$ is adjacent to a vertex $v$, then we denote it as $u \sim v$. The degree $deg(v)$ of a vertex $v$ in $\Gamma$ is the number of edges incident to $v$. A graph $\Gamma$ is said to be regular if and only if the degree of every vertex is equal. A graph $\Gamma$ is complete graph if each vertex are adjacent with every other vertex of the graph $\Gamma$ and denoted by $K_n$ where $n$ is the number of vertices in the graph.

\vspace{0.2 cm}

%			In simpler way,one should say any abelian group $G$ of order $n$ is isomorphic to 
	%		If we change the order of direct product i.e. rearranging factors of direct product, still the newly formed direct product will be isomorphic to the previous one hence isomorphic to group $G$. To avoid complexities \\			
%\begin{center}
	%\\
%\end{center}
%\section{Generalized Dihedral Groups}
A \textit{generalized dihedral group} $D(G)$ is the semi-direct product $G \rtimes_\phi C_2$ of an abelian group $G$ with a cyclic group $C_2=\{1,-1\}$, where homomorphism $\phi$ maps $1$ and $-1$ to identity automorphism and inversion automorphism respectively. Therefore, the binary operation on $D(G)$ is defined as follows

\[
(g_1,\, c_1)\,(g_2,\,c_2)\,=\,(g_1\,{g_2}^{c_1}\, , \, c_1\,c_2\,),
\]
where $g\in G$ and $c \in C_2$.

\vspace{0.2 cm}

In this paper, $G$ will denote a finite abelian group of order $n$. Then, $G$ is isomorphic to the direct product $\mathbb{Z}_{m_{1}} \times \cdots \times \mathbb{Z}_{m_{k}} $ of cyclic groups, where $m_{1} \cdots m_{k}=n$. In this paper, we will identify $G$ with $\mathbb{Z}_{m_{1}} \times \cdots \times \mathbb{Z}_{m_{k}}$ and we place factors of direct product of $2$-power order (if it exists) before the factors of odd order, that is \[G = \underbrace{\mathbb{Z}_{m_{1}} \times \cdots \times \mathbb{Z}_{m_{r}}}_{\text{factors of $2$-power order}} \times \overbrace{ \mathbb{Z}_{m_{r+1}} \times \cdots \times \mathbb{Z}_{m_{k}}}^{\text{factors of odd order}}\]

\vspace{0.2 cm}

In the above, $m_i=2^t$ for some integer $t \geq 0$ and $1\leq i \leq r$. By \cite[Theorem 2.9, p. 9]{bb}, we note that $D(G)$ is abelian if and only if $G$  is elementary abelian $2$-group. Throughout the paper, by a generalized dihedral group $D(G)$ we will always mean it to be non-abelian, otherwise it will be stated. Following are elementary results.

\begin{lem}\label{s2l1} (\cite[Proposition 4.12, p.11]{bb})
Given any abelian group $G$, the element $(g,1)$ is in $Z(D(G))$ if and only if $g^2=e \in G$.
\end{lem}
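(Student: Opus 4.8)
The plan is to prove both directions at once by directly computing the commuting condition for $(g,1)$ against an arbitrary element of $D(G)$. First I would fix a generic element $(h,c) \in D(G)$ with $h \in G$ and $c \in C_2$, and apply the defining multiplication rule $(g_1,c_1)(g_2,c_2) = (g_1 g_2^{c_1}, c_1 c_2)$ to expand the two products $(g,1)(h,c)$ and $(h,c)(g,1)$.

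Carrying out this computation, on one side $(g,1)(h,c) = (g\,h^{1}, 1\cdot c) = (gh, c)$, since the exponent $1$ acts as the identity automorphism. On the other side $(h,c)(g,1) = (h\,g^{c}, c\cdot 1) = (h g^{c}, c)$. Hence $(g,1)$ commutes with $(h,c)$ if and only if $gh = h g^{c}$ in $G$. Because $G$ is abelian, the left-hand side equals $hg$, and cancelling $h$ reduces the condition to $g = g^{c}$.

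I would then split on the two values of $c$. For $c = 1$ the condition reads $g = g^{1} = g$, which holds automatically, so the elements $(h,1)$ impose no constraint. For $c = -1$ the homomorphism $\phi$ sends $-1$ to the inversion automorphism, so $g^{c} = g^{-1}$ and the condition becomes $g = g^{-1}$, i.e.\ $g^2 = e$. Since $(g,1) \in Z(D(G))$ means exactly that $(g,1)$ commutes with every $(h,c)$, and the $c=1$ case is vacuous while the $c=-1$ case is equivalent to $g^2 = e$ for every $h$, the two statements coincide and the lemma follows.

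I do not anticipate a real obstacle, since the claim collapses to a single group-theoretic computation once the multiplication is written out. The only points needing care are interpreting $g^{c}$ correctly as the action of the inversion automorphism when $c = -1$, and observing that the resulting constraint $g^2 = e$ does not depend on the choice of $h$, so that one nontrivial case suffices to characterize centrality.
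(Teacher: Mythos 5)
Your proof is correct: the direct expansion of $(g,1)(h,c)$ and $(h,c)(g,1)$ via the semidirect-product multiplication, reduction to $g = g^{c}$ using commutativity of $G$, and the case split on $c = \pm 1$ is exactly the standard verification. The paper itself gives no proof, citing \cite[Proposition 4.12]{bb} instead, and your computation is the same argument that source carries out, so there is nothing to add.
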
    
 
\begin{lem}\label{s2l2} (\cite[Proposition 4.13, p.12]{bb})
Given any abelian group $G$, the element $(g, -1)$ is in the center of $D(G)$ if and only if $D(G)$ is abelian.
\end{lem}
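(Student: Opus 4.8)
The plan is to prove both implications, with the forward direction carrying all the content. For the reverse implication, observe that if $D(G)$ is abelian then by definition every element commutes with every other element, so in particular $(g,-1)$ lies in $Z(D(G))$; nothing needs to be checked here.

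For the forward implication, I would suppose $(g,-1)\in Z(D(G))$ and extract the constraint this places on $G$. The key observation is that it suffices to test commutativity of $(g,-1)$ against the elements of the form $(h,1)$ with $h\in G$. Applying the given semidirect-product multiplication rule, I would compute
\[
(g,-1)\,(h,1)=(g\,h^{-1},\,-1),\qquad (h,1)\,(g,-1)=(h\,g,\,-1).
\]
Equating first coordinates gives $g\,h^{-1}=h\,g$, and since $G$ is abelian we have $h\,g=g\,h$, so $g\,h^{-1}=g\,h$; cancelling $g$ yields $h^{-1}=h$, i.e. $h^2=e$. As $h\in G$ was arbitrary, every element of $G$ has order dividing $2$, so $G$ is an elementary abelian $2$-group.

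To finish, I would invoke the cited characterization \cite[Theorem 2.9]{bb}, which states that $D(G)$ is abelian precisely when $G$ is an elementary abelian $2$-group; combined with the previous paragraph this gives that $D(G)$ is abelian, completing the forward implication. The computation is entirely routine once the group law is applied correctly, so there is no genuine obstacle; the only point deserving a little care is recognizing that testing against the ``reflection-free'' subgroup $\{(h,1):h\in G\}$ already forces $h^2=e$ for all $h$, so one never needs to examine products with the elements $(h,-1)$.
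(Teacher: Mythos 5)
Your proof is correct. Note that the paper does not prove this lemma at all --- it is quoted verbatim from Brown's thesis \cite[Proposition 4.13]{bb} --- so there is no internal proof to compare against; your computation $(g,-1)(h,1)=(gh^{-1},-1)$ versus $(h,1)(g,-1)=(hg,-1)$, forcing $h^2=e$ for all $h\in G$, is the standard argument and is carried out correctly. One small stylistic point: you could make the argument fully self-contained by skipping the appeal to \cite[Theorem 2.9]{bb} at the end, since once $h^2=e$ holds for all $h\in G$ the inversion automorphism is the identity, so $D(G)\cong G\times C_2$ is visibly abelian; as written, though, invoking the cited characterization (which the paper itself quotes in its introduction) is perfectly legitimate.
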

 
\begin{lem}\label{s2l3} 
Let $G$ be an abelian group. Then $(g_1, -1),(g_2,-1)\in D(G)$ commute if and only if $g_1^2=g_2^2$.
\end{lem}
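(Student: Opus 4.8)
The plan is to verify the commuting condition directly from the definition of the group operation on $D(G)$. First I would compute the product $(g_1,-1)(g_2,-1)$ using the rule $(g_1,c_1)(g_2,c_2)=(g_1\,g_2^{c_1},\,c_1 c_2)$ with $c_1=c_2=-1$, which gives $(g_1\,g_2^{-1},\,1)$. Symmetrically, reversing the order yields $(g_2,-1)(g_1,-1)=(g_2\,g_1^{-1},\,1)$.

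These two elements commute precisely when the two products coincide. Since the products already agree in the second coordinate (both equal to $1$), the condition reduces to the equality $g_1\,g_2^{-1}=g_2\,g_1^{-1}$ in $G$. Using that $G$ is abelian, I would rearrange this to $g_1\,g_1=g_2\,g_2$, that is $g_1^2=g_2^2$, establishing both directions of the equivalence simultaneously.

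There is no genuine obstacle here: the statement follows from a single direct computation. The only point requiring a little care is the correct handling of the exponent $c_1=-1$, which inverts the second entry, together with the use of commutativity in $G$ to collect all factors of $g_1$ (respectively $g_2$) on one side.
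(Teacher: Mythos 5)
Your computation is correct and is exactly the intended argument: the paper states this lemma without proof as an elementary fact, and the direct expansion $(g_1,-1)(g_2,-1)=(g_1g_2^{-1},1)$ versus $(g_2,-1)(g_1,-1)=(g_2g_1^{-1},1)$, followed by multiplying $g_1g_2^{-1}=g_2g_1^{-1}$ through by $g_1g_2$ in the abelian group $G$, is the standard verification. Nothing is missing.
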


\begin{lem}\label{s2l4} 
Let $G$ be an abelian group. If $(g_1, 1)\in D(G)$ commute with $(g_2,-1)\in D(G)$, then $(g_1,1)\in Z(D(G))$.
\end{lem}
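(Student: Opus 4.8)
The plan is to verify the commuting condition directly from the group operation on $D(G)$ and then to reduce it to the hypothesis of Lemma \ref{s2l1}. First I would compute the two products in the order that matters. Using the rule $(g_1,c_1)(g_2,c_2)=(g_1\,{g_2}^{c_1},\, c_1 c_2)$ and recalling that the exponent $1$ acts trivially while the exponent $-1$ sends an element to its inverse, I get
\begin{align*}
(g_1,1)(g_2,-1) &= (g_1 g_2,\, -1), \\
(g_2,-1)(g_1,1) &= (g_2 g_1^{-1},\, -1).
\end{align*}
The second coordinates already agree, so the commuting condition $(g_1,1)(g_2,-1)=(g_2,-1)(g_1,1)$ collapses to the single equation $g_1 g_2 = g_2 g_1^{-1}$ in $G$.

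Next I would invoke that $G$ is abelian. This lets me rewrite $g_2 g_1^{-1}$ as $g_1^{-1} g_2$, so that cancelling $g_2$ on both sides yields $g_1 = g_1^{-1}$, that is, $g_1^2 = e$. At this point the work is essentially done: the hypothesis that $(g_1,1)$ commutes with some element of the form $(g_2,-1)$ has been converted into the purely group-theoretic condition $g_1^2=e$.

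Finally I would apply Lemma \ref{s2l1}, which states precisely that $(g_1,1)\in Z(D(G))$ if and only if $g_1^2=e$. Since we have just established $g_1^2=e$, the conclusion $(g_1,1)\in Z(D(G))$ follows immediately. There is no substantial obstacle here beyond bookkeeping: the only point that demands care is the correct use of the semidirect-product exponent convention in the first computation, since interchanging the action would reverse which factor is inverted and could obscure the cancellation that produces $g_1^2=e$.
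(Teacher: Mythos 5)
Your proof is correct: the computation $(g_1,1)(g_2,-1)=(g_1g_2,-1)$ versus $(g_2,-1)(g_1,1)=(g_2g_1^{-1},-1)$ forces $g_1^2=e$ in the abelian group $G$, and Lemma~\ref{s2l1} then places $(g_1,1)$ in $Z(D(G))$. The paper states this lemma without proof as an elementary fact, and your direct verification via the semidirect-product multiplication rule is exactly the intended argument.
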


By Lemmas \ref{s2l1} and \ref{s2l2}, one observes that $Z(D(G)) = \{(g, 1) \,|\, g^2 = e\}$. This implies that $|Z(D(G))|=2^r$. Let $G_1=\{(g,1)\in D(G) \mid  g \in G\}$ and $G_2=\{(g,-1) \in D(G) \mid g \in G\}$. Note that $Z(D(G)) \subseteq G_1$. Further, we write $\Omega_1 = Z(D(G))$, $\Omega_2=G_1 \setminus Z(D(G))$ and $\Omega_3 = G_2$. Let $g_1,g_2 \in G=\mathbb{Z}_{m_{1}} \times  \cdots \mathbb{Z}_{m_{r}} \times  \mathbb{Z}_{m_{r+1}} \times \cdots \times \mathbb{Z}_{m_{k}}$. Then $g_i=(g_{i1},\cdots,g_{ik})$, $i=1,2$. Let $(g_1,-1)$ commutes with $(g_2,-1)$. Then, by Lemma \ref{s2l3}, $g_{1j}^2=g_{2j}^2$ for all $1 \leq j \leq k$. This implies that the number of elements in $G_2$ that commute with a fix element $(g_1,-1)$ is $2^r$. Also, one notes that $g_1^2=g_2^2$ defines an equivalence relation in $G$. Therefore, $G_2$ is partitioned into $\frac{n}{2^r}$ subsets. We denote these subsets by $B_1,\cdots, B_{\frac{n}{2^r}}$.   

\vspace{0.2 cm}

In the last twenty years, plenty of researchers have been attracted to study the graphs of algebraic structure. The study of algebraic structures, using the properties of graphs, has become an exciting research topic in these years, leading to many fascinating results and raising questions. The commuting graph of a group is studied by various author (see \cite{ab}-\cite{ali} and \cite{che}-\cite{seg}). In \cite{ali}, the certain properties of the commuting graph of dihedral group are studied. In this paper, we have studied those properties for the commuting graph of generalized dihedral group

\section{Commuting Graph of $D(G)$}

For a non-empty subset $X$ of $D(G)$, the commuting graph of $X$ denoted by $\Gamma (X) = \mathfrak {C}(D(G),X) $ is a graph whose vertex set $V(\Gamma)$ is $X$ and any two vertices $u$ and $v$ are adjacent $(u \sim v)$ if and only if $uv=vu$. If $m_i =2 \, \forall \, i \in \{1,2. \cdots,k\}$ then $D(G)$ is abelian hence $ \mathfrak {C}(D(G),D(G))=K_{2^n}$. Since $\Omega_2 \subseteq G_1$, each element of $\Omega_2$ commutes with each other. Also, no element of $\Omega_2$ commute with any element of $G_2$. Therefore, we obtain the following.

\begin{prop}\label{s3p1} If $X$ is subset of $D(G)$, then   
\[\mathfrak {C}(D(G),X)= \begin{cases}
  K_{2^r} & \text{if } \, X=\Omega_1\\
  K_{n-2^{r}} & \text{if } \,X=\Omega_2\\
  K_{2^r} & \text{if } X=B_i~(1 \leq i \leq \frac{n}{2^p})
	\end{cases}
  \]
	\end{prop}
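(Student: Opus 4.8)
The plan is to prove each of the three cases separately, in each case showing that the relevant vertex set induces a complete graph by verifying that any two distinct vertices commute. Recall the author has already established (just before the statement) the two key facts I will lean on: first, since $\Omega_2 \subseteq G_1$, all elements of the form $(g,1)$ commute with one another because $G$ is abelian; and second, two elements $(g_1,-1)$ and $(g_2,-1)$ commute exactly when $g_1^2 = g_2^2$, by Lemma \ref{s2l3}.

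For the first case $X = \Omega_1 = Z(D(G))$, the result is essentially immediate: by definition every element of the center commutes with every element of $D(G)$, so in particular any two distinct elements of $\Omega_1$ are adjacent. Since $|Z(D(G))| = 2^r$ as noted after Lemma \ref{s2l4}, the induced graph is $K_{2^r}$. For the second case $X = \Omega_2 = G_1 \setminus Z(D(G))$, I would take two distinct elements $(g_1,1), (g_2,1) \in \Omega_2$ and compute their product both ways using the group law; because the second coordinate is $1$ (the identity of $C_2$), the inversion automorphism is not triggered and the product reduces to $(g_1 g_2, 1) = (g_2 g_1, 1)$ by commutativity of $G$. Hence all vertices are mutually adjacent and the graph is complete on $|\Omega_2| = |G_1| - |Z(D(G))| = n - 2^r$ vertices, giving $K_{n-2^r}$.

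For the third case $X = B_i$, I would recall that the $B_i$ are precisely the equivalence classes of the relation $g_1^2 = g_2^2$ on $G_2$, and that each such class was shown to have exactly $2^r$ elements. By Lemma \ref{s2l3}, any two elements $(g_1,-1),(g_2,-1)$ lying in the same class $B_i$ satisfy $g_1^2 = g_2^2$ and therefore commute; thus every pair of distinct vertices in $B_i$ is adjacent, and the induced graph is complete on $2^r$ vertices, namely $K_{2^r}$. (I note the exponent $2^p$ appearing in the statement's range for $i$ should read $2^r$, matching the earlier partition count $\tfrac{n}{2^r}$; I would correct this typo.)

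None of the three verifications presents a genuine obstacle, since each is a direct application of an already-proved lemma or of the abelianness of $G$; the proof is essentially a matter of organizing the three routine adjacency checks and correctly counting vertices in each case. The only point demanding a little care is the bookkeeping for $|\Omega_2| = n - 2^r$ and the confirmation that each $B_i$ has cardinality exactly $2^r$, both of which have already been supplied in the surrounding discussion, so I would simply cite them rather than rederive them.
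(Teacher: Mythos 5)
Your proof is correct and takes essentially the same approach as the paper, which deduces the proposition directly from the remarks preceding it: elements of $G_1$ (hence of $\Omega_1$ and $\Omega_2$) mutually commute since $G$ is abelian, and each block $B_i$ is an equivalence class of size $2^r$ under $g_1^2=g_2^2$, so Lemma \ref{s2l3} gives completeness there. You are also right that the exponent $2^p$ in the statement is a typo for $2^r$.
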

%\begin{proof} : First, assume that $X= \Omega_1 = Z(D(G))$. Then $X$ is abelian subgroup of $D(G)$. In this case every vertex is adjacent with every other vertex by an edge. Since $|Z(D(G))|= 2^r $, $\mathfrak {C}(D(G),\Omega_1)=K_{2^p}$.\\[2pt]
%Case-2 Since $\Omega_2 = G_1 \backslash \Omega_1 $ and $\Omega_1 \subset G_1$ implies that $|\Omega_2|= |G_1|-|\Omega_1|= n - 2^{p}$. Since $\Omega_2$ is abelian subset of $D(G)$ as $\Omega_2$ is subset of $G_1$ which is abelian subgroup of $D(G)$. This implies that every vertex from $\Omega_2$ is adjacent with rest of all vertices from $\Omega_2$. Hence $ \mathfrak {C}(D(G),\Omega_2)=K_{n-2^{p}}$.\\[2pt]
%Case-3 When $X=B_i(1 \leq i \leq n/2^p)$. Then $X$ is abelian block of order $2^p$. Hence $\mathfrak {C}(D(G),B_i)=K_{n-2^p}.$ 
%\end{proof}

The join $ \Gamma ^{\prime} \vee \Gamma^{\prime \prime}$ of two graphs $ \Gamma ^{\prime}$ and $ \Gamma ^{\prime \prime}$ is a graph with vertex set $V(\Gamma ^{\prime}) \cup V(\Gamma ^{\prime \prime})$ and an edge set $E( \Gamma ^{\prime}) \cup E( \Gamma ^{\prime \prime}) \cup \{u \sim v \, | \, u \in V( \Gamma^{\prime}) \, \& \, v \in \Gamma^({\prime \prime})\}$ and the union $\Gamma ^{\prime} \cup \Gamma^{\prime \prime}$ of two graphs $ \Gamma ^{\prime}$ and $ \Gamma^{\prime \prime}$
 is a graph with vertex set $V(\Gamma ^{\prime}) \cup V(\Gamma ^{\prime \prime})$ and an edge set $E(\ \Gamma^{\prime}) \cup E( \Gamma ^{\prime \prime})$. If a graph $\Gamma$ is union of $m$ complete graph $K_n$ with $n$ vertices, then we write $\Gamma = mK_n$. 

\begin{cor}\label{s3p1c1}
Let $\Gamma= \mathfrak {C}(D(G),D(G))$ be the commuting graph of $D(G)$. Then $\Gamma=K_{2^r} \vee (K_{n-2^r} \cup \frac {n}{2^r}K_{2^r}) $.
\end{cor}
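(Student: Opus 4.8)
The plan is to read off the full commuting graph $\Gamma$ from the partition $D(G) = \Omega_1 \cup \Omega_2 \cup \Omega_3$, where $\Omega_3 = B_1 \cup \cdots \cup B_{\frac{n}{2^r}}$, by determining exactly which pairs of these blocks carry edges between them and invoking Proposition \ref{s3p1} for the induced subgraph on each block. First I would handle the center. Since $\Omega_1 = Z(D(G))$, every element of $\Omega_1$ commutes with every element of $D(G)$, so $u \sim v$ for all $u \in \Omega_1$ and all $v \in V(\Gamma) \setminus \{u\}$. By Proposition \ref{s3p1} the induced subgraph on $\Omega_1$ is $K_{2^r}$. Consequently $\Omega_1$ is completely joined (in the sense of $\vee$) to the induced subgraph $\Gamma'$ on $\Omega_2 \cup \Omega_3$, which accounts for the factor $K_{2^r} \vee (\cdots)$; it then remains only to identify $\Gamma'$ as $K_{n-2^r} \cup \frac{n}{2^r}K_{2^r}$.

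Next I would analyze $\Gamma'$ by checking adjacencies between the remaining blocks. Proposition \ref{s3p1} already gives that $\Omega_2$ induces $K_{n-2^r}$ and each $B_i$ induces $K_{2^r}$, so it suffices to show that $\Gamma'$ has no edges between distinct blocks. For a hypothetical edge from $\Omega_2$ to $\Omega_3$: if $(g_1,1) \in \Omega_2$ commuted with some $(g_2,-1) \in \Omega_3$, then Lemma \ref{s2l4} would force $(g_1,1) \in Z(D(G)) = \Omega_1$, contradicting $(g_1,1) \in \Omega_2$. For a hypothetical edge between distinct $B_i$ and $B_j$: take $(g_1,-1) \in B_i$ and $(g_2,-1) \in B_j$; since $B_i$ and $B_j$ are distinct classes of the relation $g_1^2 = g_2^2$ on $G_2$, we have $g_1^2 \neq g_2^2$, whence Lemma \ref{s2l3} shows they do not commute.

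Combining these observations, $\Gamma'$ is the disjoint union $K_{n-2^r} \cup B_1 \cup \cdots \cup B_{\frac{n}{2^r}} = K_{n-2^r} \cup \frac{n}{2^r}K_{2^r}$, and joining $\Omega_1$ to all of it yields $\Gamma = K_{2^r} \vee (K_{n-2^r} \cup \frac{n}{2^r}K_{2^r})$, as claimed. The computational content is entirely routine once the three lemmas are in hand; the only point requiring care is the bookkeeping of the center's dual role, since $\Omega_1$ both forms its own clique $K_{2^r}$ and is completely joined to everything else. I would therefore make sure the $\vee$ decomposition cleanly separates the edges incident to $\Omega_1$ from the edges internal to $\Omega_2 \cup \Omega_3$; as this separation is precisely what the definition of the join $\vee$ supplies, no genuine obstacle arises.
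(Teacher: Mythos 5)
Your proposal is correct and follows essentially the same route as the paper: both decompose $D(G)$ into $\Omega_1$, $\Omega_2$, and the blocks $B_i$, use Proposition \ref{s3p1} for the induced cliques, rule out edges between $\Omega_2$ and $\Omega_3$ and between distinct blocks (via Lemmas \ref{s2l4} and \ref{s2l3}), and then use centrality of $\Omega_1$ to obtain the join. The only difference is cosmetic: you cite the two lemmas explicitly where the paper relies on the facts already recorded in its introduction.
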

\begin{proof}
By Proposition \ref{s3p1}, there is no edge between two vertices from distinct blocks $B_i$. Therefore,  $\mathfrak {C}(D(G),\Omega_3)= \bigcup \limits _{i=1}^{n/2^r} \mathfrak {C}(D(G),B_{i})= \frac {n}{2^r}K_{2^r}$. Also note that there is no edge between vertices from $\Omega_2$ and $\Omega_3$. Further, since $\Omega_1$ is center of $D(G)$, each vertex from $ \Omega_2 \cup \Omega_3$ is adjacent with each vertex of $\Omega_1$. Hence $ \mathfrak {C}(D(G),D(G)) = K_{2^r} \vee \{ K_{n-2^r} \cup \frac {n}{2^r}K_{2^r} \} $.
\end{proof}

In a simple undirected graph $\Gamma$, degree of a vertex $v$ is the number of edges incident to that vertex and denoted by $deg(v)$. Now, we have the following.
\begin{cor}\label{s3p1c2}
Let $\Gamma= \mathfrak {C}(D(G),D(G))$ be the commuting graph of $D(G)$ and $v$ be a vertex of $\Gamma$. Then
\[
deg(v)= \begin{cases}

2n-1 & \text{if} \, v \in \Omega_1 \\ 
n-1  & \text{if} \, v \in\Omega_2\\ 
2^{r+1}-1 & \text{if}  \, v \in \Omega_3 
\end{cases}
\]
\end{cor}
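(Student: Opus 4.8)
The plan is to read the degree of each vertex directly off the join--union decomposition $\Gamma = K_{2^r} \vee (K_{n-2^r} \cup \frac{n}{2^r}K_{2^r})$ established in Corollary \ref{s3p1c1}, treating the three cases $v \in \Omega_1$, $v \in \Omega_2$, $v \in \Omega_3$ separately. The bookkeeping will rely on $|D(G)| = 2n$, together with $|\Omega_1| = 2^r$, $|\Omega_2| = n - 2^r$, and the fact that $\Omega_3$ is partitioned into $\frac{n}{2^r}$ blocks $B_i$, each of cardinality $2^r$. For $v \in \Omega_1 = Z(D(G))$ the argument is immediate: a central element commutes with every other element, so $v \sim u$ for all $u \neq v$, and since $\Gamma$ has $2n$ vertices this yields $deg(v) = 2n - 1$. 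In the decomposition this is just the statement that $v$ lies in the join factor $K_{2^r}$, which is adjacent to everything.

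For $v \in \Omega_2 = G_1 \setminus Z(D(G))$ I would combine two observations. Since $\Omega_2 \subseteq G_1$ and $G$ is abelian, $v$ commutes with every element of $G_1 = \Omega_1 \cup \Omega_2$, contributing $n - 1$ neighbours. On the other hand, Lemma \ref{s2l4} tells us that an element of $G_1$ commutes with an element of $G_2$ only when it is central; as $v \notin Z(D(G))$, it follows that $v$ has no neighbour in $\Omega_3 = G_2$. Hence $deg(v) = n - 1$.

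Finally, for $v \in \Omega_3 = G_2$ there are three contributions to tally. Adjacency to the entire join factor $\Omega_1$ supplies $2^r$ neighbours; by Lemma \ref{s2l4} again, $v$ commutes with no element of $\Omega_2$; and inside $\Omega_3$ the vertex $v$ is adjacent precisely to the remaining members of its own block $B_i$, since there are no edges between distinct blocks. As each block has $2^r$ elements, this contributes $2^r - 1$, and summing gives $deg(v) = 2^r + (2^r - 1) = 2^{r+1} - 1$.

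The computation is essentially careful counting, so the only genuine care needed is to recall that every block $B_i$ has exactly $2^r$ elements and that $\Omega_1, \Omega_2, \Omega_3$ are pairwise disjoint, so that no neighbour is counted twice. I expect the $\Omega_3$ case to be the one most prone to slip, since it is the only case where all three structural features---the join with the centre, the absence of $\Omega_2$--$\Omega_3$ edges, and the block decomposition of $\Omega_3$---enter the count simultaneously.
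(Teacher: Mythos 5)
Your proposal is correct and follows exactly the route the paper intends: the paper states this corollary without proof as an immediate consequence of the decomposition $\Gamma=K_{2^r}\vee(K_{n-2^r}\cup\frac{n}{2^r}K_{2^r})$ from Corollary \ref{s3p1c1}, and your case-by-case count (central vertices adjacent to all $2n-1$ others, $\Omega_2$ adjacent only within $G_1$, and $\Omega_3$ adjacent to $\Omega_1$ plus the $2^r-1$ remaining vertices of its own block) is precisely that reading, with the adjacency facts correctly justified via Lemmas \ref{s2l3} and \ref{s2l4}. All three degree computations check out.
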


\begin{cor}\label{s3p1c3}
Let $\Gamma= \mathfrak {C}(D(G),D(G))$ be the commuting graph of $D(G)$ and $v$ be a vertex of $\Gamma$. Then\\
	\[
	|E(\Gamma)|= 3n2^{r-1}+ \frac{n(n-2)}{2}
	\]	
\end{cor}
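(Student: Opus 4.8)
The plan is to apply the handshake lemma, which states that the sum of the degrees of all vertices of a simple graph equals twice the number of its edges. First I would recall from the discussion preceding Proposition \ref{s3p1} that the vertex set $V(\Gamma)=D(G)$ is partitioned into the three classes $\Omega_1$, $\Omega_2$, and $\Omega_3$, with cardinalities $|\Omega_1|=|Z(D(G))|=2^r$, $|\Omega_2|=|G_1\setminus Z(D(G))|=n-2^r$, and $|\Omega_3|=|G_2|=n$. Corollary \ref{s3p1c2} then supplies the degree of a vertex according to which of these three classes it belongs to.

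Next I would group the degree sum by these classes and write
\[
\sum_{v\in V(\Gamma)}\deg(v)=2^r(2n-1)+(n-2^r)(n-1)+n(2^{r+1}-1).
\]
Expanding each product, the $\pm 2^r$ contributions coming from the first two summands cancel, the two $2^{r+1}n$ terms combine to give $3\cdot 2^r n$, and the remaining polynomial part collapses to $n^2-2n=n(n-2)$. Thus the total degree sum equals $3\cdot 2^r n+n(n-2)$.

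Finally, dividing by two via the handshake lemma gives $|E(\Gamma)|=3n2^{r-1}+\frac{n(n-2)}{2}$, which is the asserted formula. I do not expect any genuine obstacle here: the entire argument is a single application of the handshake lemma followed by the routine algebraic simplification above, and it requires no structural input beyond the degree values from Corollary \ref{s3p1c2} together with the already-established block sizes $|\Omega_1|=2^r$, $|\Omega_2|=n-2^r$, and $|\Omega_3|=n$.
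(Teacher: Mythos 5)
Your proof is correct and follows exactly the route the paper intends: the corollary is stated immediately after the degree formula of Corollary \ref{s3p1c2} with no separate argument, precisely because it is the handshake lemma applied to the degree sum $2^r(2n-1)+(n-2^r)(n-1)+n(2^{r+1}-1)=3\cdot 2^r n+n(n-2)$. Your algebraic simplification and the class sizes $|\Omega_1|=2^r$, $|\Omega_2|=n-2^r$, $|\Omega_3|=n$ are all accurate, so nothing is missing.
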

The coloring of a graph is an assignment of colors to the vertices of the graph so that no two adjacent vertices have the same color. The chromatic number of a graph is the smallest number of colors needed to color the graph and denoted by $\psi (\Gamma)$. We have the following proposition.
%For a complete graph $K_n$, the chromatic number $\psi(K_n)$ is n as every vertex is adjacent with other.\\[5pt]
\begin{prop}\label{s33p2}
Let $\Gamma= \mathfrak {C}(D(G),D(G))$ be the commuting graph of $D(G)$. Then $\psi (\Gamma)= n$.
\end{prop}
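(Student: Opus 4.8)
The plan is to squeeze $\psi(\Gamma)$ between $n$ and $n$: a clique of size $n$ forces $\psi(\Gamma)\geq n$, and an explicit colouring that recycles colours across the non-adjacent parts of $\Gamma$ gives $\psi(\Gamma)\leq n$. First I would produce the clique. Since $G$ is abelian, the $n$ elements of $G_1=\Omega_1\cup\Omega_2=\{(g,1)\mid g\in G\}$ pairwise commute, so they induce a copy of $K_n$ in $\Gamma$. In any proper colouring the vertices of a clique must receive pairwise distinct colours, hence $\psi(\Gamma)\geq n$.

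For the upper bound I would first record the arithmetic inequality $n-2^r\geq 2^r$, that is $n\geq 2^{r+1}$. The blocks $B_i$ are exactly the equivalence classes of the relation $g_1^2=g_2^2$ on $G_2$, so their number $n/2^r$ equals the number of distinct values of $g^2$, namely $\lvert\{g^2\mid g\in G\}\rvert$. Because $D(G)$ is non-abelian, $G$ is not an elementary abelian $2$-group (the remark following Lemma~\ref{s2l2}), so $\{g^2\mid g\in G\}\neq\{e\}$; therefore $n/2^r\geq 2$, which is the desired inequality.

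Then I would build a proper colouring with the colour set $\{1,\dots,n\}$. Colour the clique $G_1$ bijectively, giving $\Omega_1$ the colours $\{1,\dots,2^r\}$ and $\Omega_2$ the colours $\{2^r+1,\dots,n\}$. For each block $B_i$, which by Corollary~\ref{s3p1c1} is a clique of size $2^r$ joined to $\Omega_1$ but adjacent neither to $\Omega_2$ nor to any other block, assign its $2^r$ vertices distinct colours drawn from $\{2^r+1,\dots,n\}$; this is possible exactly because $n-2^r\geq 2^r$. This colouring is proper: the vertices of $G_1$ are all distinct; the vertices inside each block are distinct; a block and $\Omega_1$ are adjacent but use disjoint colour ranges; and a block is non-adjacent to $\Omega_2$ and to every other block, so reusing the $\Omega_2$ colours on the blocks creates no conflict. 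Hence $\psi(\Gamma)\leq n$, and combining the two bounds yields $\psi(\Gamma)=n$.

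I expect the crux to be the counting inequality $n\geq 2^{r+1}$ paired with the structural observation from Corollary~\ref{s3p1c1} that $\Omega_2$ is non-adjacent to $\Omega_3$ and that distinct blocks are mutually non-adjacent: this is precisely what lets every block borrow the colours already spent on $\Omega_2$. Without $n\geq 2^{r+1}$ there would not be enough spare colours outside $\Omega_1$ to colour a single block, so establishing that inequality from non-abelianness is the step that makes the construction go through.
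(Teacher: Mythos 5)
Your proof is correct and takes essentially the same route as the paper: the lower bound comes from the clique on $G_1=\Omega_1\cup\Omega_2\cong K_n$, and the upper bound from recycling the $\Omega_2$-colours on the mutually non-adjacent blocks $B_i$, which are joined only to $\Omega_1$. The one genuine improvement is that you actually derive the needed inequality $n-2^r\geq 2^r$ from non-abelianness of $D(G)$, whereas the paper merely asserts the strict version $|\Omega_2|>2^r$ without proof --- and that strict inequality can in fact fail (e.g.\ $G=\mathbb{Z}_4$, where $n-2^r=2^r=2$), while your non-strict form is exactly what the colouring requires.
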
 
\begin{proof}  We start coloring from $\Omega_1 \cup \Omega_2=G_1$. Since $ \mathfrak {C}(D(G),\Omega_1 \cup \Omega_2)$ is complete graph of $n$ vertices, we need $n$ colors to get it colored. Therefore $(\Omega_1 \cup \Omega_2)$ is colored with $n$ colors. To color vertices from $\Omega_3$, we move block wise. In each block there are $2^r$ vertices and these all are adjacent with vertices from $\Omega_1$ but not adjacent with vertices from $\Omega_2$. Since $|\Omega_2|>2^r$, we can choose any of $2^r$ colors that we used for $\Omega_2$. These colors again can be used to color another block as any block has no common edge with the rest of blocks. Hence, $\psi(\Gamma)=n$.
\end{proof}

\section{Detour Distance of Commuting graph of $D(G)$}

The detour distance $d_D(u,v)$ between two vertices $u$ and $v$ in a graph $\Gamma$ is the length of a longest $u-v$ path in $\Gamma$. A $u-v$ path of length $d_D(u,v)$ is called a $u-v$ detour geodesic. The detour eccentricity $(ecc_D(v))$ of a vertex $v$ in $\Gamma$ is the maximum detour distance between $v$ and any vertex of $\Gamma$. The minimum detour eccentricity among the vertices of $\Gamma$ is called the detour radius of $\Gamma$, denoted by $rad_D(\Gamma)$. The detour diameter $diam_D(\Gamma)$ of a graph $\Gamma$ is the maximum detour eccentricity in $\Gamma$. Now, we have the following.

%For $ \, \Gamma =  \mathfrak {C}(D(G),D(G)), ecc(v)=1 \, \, \text{if} \,v \in \Omega_1 \, \, \text{and} \, \,ecc(v)=2 \, \,  \text{if} \, \, v \in \Omega_2 \cup \Omega_3$. In the next theorem we will see how $ecc_D(v)$ depends on the $|\Omega_1|$ and number of blocks $\frac{n}{2^p}$. To find the $ecc_D(v)$ of vertex $v$ let us simplify the concept of detour eccentricity $ecc_D(v)$ of vertex $v$. To find $eccc_D(v)$ we find the longest path to reach any of the vertex in $\Gamma$. The length of that path is $ecc_D(v)$. Since in a path,vertices are not repeated so a path from $v \, \,\text{to} \, \,u$ is $v-v_1-v_2-\cdots  -v_t-u$. This path  $(t+1)$ edges. So such a path started from $v$ is longest then eccentricity $ecc_D(v) \, \, \text{is} \, \, (t+1)$.\\[5pt]
 
\begin{thm}\label{s3t1}
Let  $\Gamma= \mathfrak {C}(D(G),D(G))$ be the commuting graph of $D(G)$ with $|\Omega_1|=2^r$. Then for each $v \in \Omega_1$
\[
ecc_D(v) =
\begin{cases}
2n-1               & \text{if} \, \frac{n}{2^r}<2^r \\
n+2^{r}(2^{r}-1)-1 & \text{if} \,\frac{n}{2^r} \geq 2^{r} \\
\end{cases}
\]
and for each $v \in \Omega_2 \cup \Omega_3$
\[
ecc_D(v) =
\begin{cases}
2n-1             & \text{if} \, \frac{n}{2^r} \leq 2^r \\
n+4^{r}-1        & \text{if} \, \frac{n}{2^r} > 2^{r} \\
\end{cases}
\]
\end{thm}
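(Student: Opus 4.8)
The plan is to read off the structure of $\Gamma$ from Corollary \ref{s3p1c1}: writing $a=2^{r}$ and $N=n/2^{r}$, the graph is the join of the hub $\Omega_1=K_{a}$ with the disjoint union of the single clique $\Omega_2=K_{n-a}$ and the $N$ blocks $B_1,\dots,B_N$ inside $\Omega_3$, each a $K_{a}$. The only edges leaving a pendant clique (namely $\Omega_2$ or a block $B_i$) go to the hub $\Omega_1$, and there are no edges between two distinct pendant cliques. Since $\Gamma$ is connected, $ecc_D(v)$ equals the length of a longest path having $v$ as an endpoint, so the whole theorem reduces to determining, for each type of $v$, the maximum number of vertices on a path that starts at $v$.

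The key structural observation I would isolate first is that any path $P$ in $\Gamma$ breaks into maximal runs that strictly alternate between hub-runs (inside $\Omega_1$) and pendant-runs (each inside a single pendant clique). This is forced because distinct pendant cliques are mutually non-adjacent, so one cannot pass from one pendant clique to another without an intervening hub vertex. As the hub-runs are pairwise disjoint subsets of $\Omega_1$, their number is at most $|\Omega_1|=a$. Counting along the alternation then bounds the number of pendant-runs: if $P$ starts at a hub vertex it has at most $a$ pendant-runs, while if it starts at a pendant vertex it has at most $a+1$ (the extra one arising when both endpoints are pendant vertices).

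With this in hand the upper bound is an optimization. The number of distinct pendant cliques met by $P$ is at most its number of pendant-runs, the number of vertices of $P$ inside any clique is at most the size of that clique, and $P$ contains at most $a$ hub vertices. Hence I would maximize by retaining all $a$ hub vertices together with the largest admissible pendant cliques; since $|\Omega_2|=n-a\ge a=|B_i|$ whenever $N\ge2$ (which holds throughout the relevant ranges), the clique $\Omega_2$ is always taken first, followed by as many blocks $B_i$ as the run-count permits. For $v\in\Omega_1$ this gives a Hamiltonian path (all $N+1$ pendant cliques) exactly when $N+1\le a$, i.e.\ $N<a$, and otherwise $\Omega_2$ together with $a-1$ blocks, i.e.\ $a+(n-a)+(a-1)a=n+a(a-1)$ vertices. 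For $v\in\Omega_2\cup\Omega_3$ the extra available pendant-run raises the Hamiltonicity threshold to $N\le a$, and otherwise yields $\Omega_2$ together with $a$ blocks, i.e.\ $n+a^{2}$ vertices. Converting vertex counts to path lengths reproduces the four stated formulas, since $n+a(a-1)-1=n+2^{r}(2^{r}-1)-1$ and $n+a^{2}-1=n+4^{r}-1$; the shift of the boundary from $N<a$ to $N\le a$ is precisely the discrepancy between the two vertex types at $N=a$.

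Finally I would supply the matching lower bounds by exhibiting explicit paths of the claimed lengths: alternating walks of the form (pendant-run)(single hub vertex)(pendant-run) and so on, sweeping each chosen clique completely and depositing any leftover hub vertices inside one hub-run. Choosing the starting run to contain $v$, and the selected blocks to include $v$'s own block, handles every case. The main obstacle is the upper bound rather than the construction: one must argue rigorously that the alternating run-decomposition caps the pendant-run count at $a$ or $a+1$, and then treat the boundary value $N=a$ with care, since that is the single place where hub vertices and non-hub vertices genuinely differ.
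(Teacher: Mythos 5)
Your proposal is correct and follows essentially the same route as the paper: both arguments rest on the observation that a path in $\Gamma=K_{2^r}\vee(K_{n-2^r}\cup\frac{n}{2^r}K_{2^r})$ must alternate between $\Omega_1$ and the pendant cliques $\Omega_2,B_1,\dots,B_{n/2^r}$, so that $|\Omega_1|=2^r$ caps the number of blocks visited at $2^r-1$ for a hub start and $2^r$ for a pendant start, with the greedy alternating sweep attaining the bound and $\Omega_2$ always chosen since $|\Omega_2|=n-2^r\geq 2^r$. The only difference is one of rigor rather than method: the paper narrates the greedy construction and asserts its optimality, whereas your run-decomposition explicitly establishes the matching upper bound (including the boundary case $\frac{n}{2^r}=2^r$ that separates the two vertex types), which tightens the paper's argument without changing its approach.
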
 
\begin{proof}
First assume that $v \in \Omega_1$. If $\frac{n}{2^r}<2^r$, then the center $\Omega_1$ has more elements than number of blocks. To get a maximum length path starting from $v \in \Omega_1$, one first covers all vertices of $\Omega_2$, for they all are adjacent. Therefore, one covers each vertex from $\Omega_2$ without repetition. Now, one moves to any vertex of $\Omega_1$ except $v$ itself and from there one moves to one of the blocks and complete all the vertices of that block. Now, move to another vertices in $\Omega_1$ from there we again cover a block. Repeating this process we cover all the blocks. If there are some vertices still left in $\Omega_1$, then they will be covered at last. Hence $ecc_D(v)=2n-1$. If $\frac{n}{2^r} > 2^{r}$, then we do the same process as above but the vertices from $\Omega_1$ exhausts before it cover all the blocks. Hence in this case the maximum blocks that can be covered is $2^r-1$. Therefore, the total vertices which are covered  are $n+2^{r}(2^{r}-1)$ by the longest path started from a vertex $v \in \Omega_1$. Hence, $ecc_D(v)=n+2^{r}(2^{r}-1)-1$.

\vspace{0.2 cm}

Now, assume that $v \in \Omega_2$ and $\frac{n}{2^r} \leq 2^r$. We proceed in the same manner as we did in the above case. We start from vertex $v \in \Omega_2$ and first cover all the vertices of $\Omega_2$ and then move to one of the vertex of $\Omega_1$. Then we head to one of the block and cover it and again move to another vertices of $\Omega_1$. Keep on repeating the process we first exhaust with blocks and still left some vertices in $\Omega_1$ which we cover at the end. Hence, $ecc_D(v)=2n-1$.  If $\frac{n}{2^r} > 2^{r}$, then one can similarly observed that $ecc_D(v)=n+4^r-1$.

\vspace{0.2 cm}

Finally, assume that $v \in \Omega_3$ and $\frac{n}{2^r} \leq 2^r$. We start from one vertex from a block and cover it and then moves to one of the vertex in $\Omega_1$ and then covers whole $\Omega_2$. Then , as above $ecc_D(v)=2n-1$. If $\frac{n}{2^r} > 2^{r}$, then by the similar argument as above $ecc_D(v)=n+4^{r}-1$.
\end{proof} 

\begin{cor}
Let  $\Gamma= \mathfrak {C}(D(G),D(G))$ be the commuting graph of $D(G)$ with $|\Omega_1|=2^r$. Then
\[
rad_D(\Gamma) =
\begin{cases}
2n-1               & \text{if $\frac{n}{2^r} < 2^r$} \\
n+2^{r}(2^{r}-1)-1 & \text{if $\frac{n}{2^r} \geq 2^{r}$} \\
\end{cases}
\]
and
\[
diam_D(\Gamma) =
\begin{cases}
2n-1             & \text{if $\frac{n}{2^r} \leq 2^r$} \\
n+4^{r}-1        & \text{if $\frac{n}{2^r} > 2^{r}$} \\
\end{cases}
\]
\end{cor}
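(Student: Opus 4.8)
The plan is to obtain both the detour radius and the detour diameter directly from Theorem \ref{s3t1}, using that $rad_D(\Gamma)=\min_{v}ecc_D(v)$ and $diam_D(\Gamma)=\max_{v}ecc_D(v)$. Since every vertex of $\Gamma$ lies in exactly one of $\Omega_1,\Omega_2,\Omega_3$, and since Theorem \ref{s3t1} records $ecc_D(v)$ by only two cases (one value on $\Omega_1$, one value on $\Omega_2\cup\Omega_3$), the eccentricity function takes at most two distinct values for any fixed $G$. Hence computing the minimum and maximum reduces to comparing these two expressions. I would organize the whole argument around the trichotomy $\frac{n}{2^r}<2^r$, $\frac{n}{2^r}=2^r$, and $\frac{n}{2^r}>2^r$, so that the boundary case $n=4^r$ is handled explicitly.

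First I would dispose of $\frac{n}{2^r}<2^r$: here both branches of Theorem \ref{s3t1} return $2n-1$, so every vertex has the same detour eccentricity and $rad_D(\Gamma)=diam_D(\Gamma)=2n-1$, matching the claimed formulas on this range. Next I would treat $\frac{n}{2^r}\ge 2^r$, splitting off the equality. When $\frac{n}{2^r}=2^r$, that is $n=4^r$, the $\Omega_1$ branch gives $n+2^r(2^r-1)-1=2n-2^r-1$, while the $\Omega_2\cup\Omega_3$ branch (falling under the $\le$ clause) gives $2n-1$; as $2^r>0$ the center vertices have strictly smaller eccentricity, whence $rad_D(\Gamma)=2n-2^r-1=n+2^r(2^r-1)-1$ and $diam_D(\Gamma)=2n-1$. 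When $\frac{n}{2^r}>2^r$, the two branches give $n+2^r(2^r-1)-1=n+4^r-2^r-1$ and $n+4^r-1$; these differ by $2^r$, so the minimum is attained on $\Omega_1$ and the maximum on $\Omega_2\cup\Omega_3$, giving $rad_D(\Gamma)=n+2^r(2^r-1)-1$ and $diam_D(\Gamma)=n+4^r-1$. Collecting the three cases reproduces the piecewise formulas, the radius switching at $\frac{n}{2^r}=2^r$ and the diameter switching strictly above it.

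The step I expect to require the most care is the bookkeeping at the boundary $n=4^r$, where the radius already follows the ``$\ge$'' formula while the diameter still follows the ``$\le$'' formula. The only thing that must be checked there is that the eccentricity on $\Omega_1$ lies strictly below that on $\Omega_2\cup\Omega_3$, so that the minimum and maximum genuinely split between the two vertex types; this is precisely the inequality $2^r>0$, so no real difficulty arises. Thus the entire content is the comparison of two linear-in-$n$ quantities in each case and keeping the strict-versus-nonstrict inequalities in the case split aligned with those appearing in Theorem \ref{s3t1}.
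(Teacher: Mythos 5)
Your proposal is correct and follows exactly the route the paper intends: the corollary is stated without proof as an immediate consequence of Theorem \ref{s3t1}, obtained by taking the minimum and maximum of the detour eccentricities over $\Omega_1$ and $\Omega_2\cup\Omega_3$. Your explicit handling of the boundary case $\frac{n}{2^r}=2^r$ (where the radius follows the ``$\geq$'' branch, the diameter the ``$\leq$'' branch, and the strict gap $2^r>0$ splits minimum and maximum between the two vertex classes) correctly fills in the bookkeeping the paper leaves implicit.
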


\section{Resolving Polynomial of Commuting Graph of $D(G)$}
Let $\beta(G)$ the metric dimension of $\Gamma$ and $\beta (\Gamma,x)= \sum_{i=\beta (\Gamma)}^{n}s_ix^{i}$ denote the resolving polynomial of $\Gamma$ (see \cite{ali}).
%For an ordered set $W=\{w_1,w_2, \dots , w_k\}$ and a vertex $v$ of $\Gamma$, the k-vector $r(v|W)=(d(v,w_1),d(v,w_2), \dots ,d(v,w_k))$ is called the representation of $v$ with respect to $W$. A set $W$ is called a resolving set for G if distinct vertices have distinct representations with respect to $W$. A minimum resolving set for $\Gamma$ is a basis of $\Gamma$, and the cardinality of a basis is called the metric dimension of $\Gamma$, denoted by $\beta (G)$.\\[5pt]

%An $i-$subset of $V(\Gamma)$ is a subset of $V(\Gamma)$ of cardinality $i$. Let $ \mathfrak {R}(\Gamma, i)$ be the family of resolving sets for $\Gamma$ which are $i-$subsets, and let $r_i=| \mathfrak {R}(\Gamma, i) |$ . Then we define the resolving polynomial of a graph $\Gamma$ of order $n$, denoted by $\beta (\Gamma,x)$, as $\beta (\Gamma,x)= \sum_{i=\beta (\Gamma)}^{n}r_ix^{i}$, where $\beta (\Gamma)$ is the metric dimension of $\Gamma$. Clearly, $r_i=0$ if and only if $i<\beta (\Gamma) \, \text{or} \, i > n$. The sequence $(r_{\beta (\Gamma)},r_{\beta (\Gamma)+1}, \cdots r_{n})$ of coefficients of $\beta (\Gamma , x)$ is called the resolving sequence.\\[5pt]

Let $u$ be a vertex of a graph $\Gamma$. Then, the set $N(u)=\{v \in V(\Gamma) \, | \, v \sim u \text{ in } \Gamma \}$ is called the open neighborhood of $u$ and $N[u]=N(u) \cup \{u\}$ is called the closed neighborhood of $u$. Two distinct vertices $u$ and $v$ of $\Gamma$ are called twins if $N[u]=N[v]$ or $N(u)=N(v)$. A subset $U$ of vertex set of $\Gamma$ is called a twin-set in $\Gamma$
if $u,v$ are twins in $\Gamma$ for every pair of distinct vertices $u,v \in U$. The following the remark from \cite{ali}.

\begin{rem}(\cite[Remark 3.3, p. 2398]{ali})\label{s4r1}
If $U$ is a twin-set in a connected graph $\Gamma$ of order $n$ with $|U|=l \geq 2$,
then every resolving set for $\Gamma$ contains at least $l-1$ vertices of $U$.
\end{rem}

\begin{thm}\label{s4t1}
Let $\Gamma= \mathfrak {C}(D(G),D(G))$ be the commuting graph of $D(G)$.Then 
\begin{equation*}
\beta (\Gamma)= \begin{cases} 
2n-\frac{n}{2^{r}}-2 & |\Omega_1| \geq 2\\
2n-3                 & |\Omega_1|=1\\
\end{cases}
\end{equation*}
\end{thm}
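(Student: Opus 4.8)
The plan is to read off the adjacency structure of $\Gamma$ from Corollary~\ref{s3p1c1} and translate it into a description of the twin-sets of $\Gamma$, then sandwich $\beta(\Gamma)$ between the lower bound furnished by Remark~\ref{s4r1} and an explicit resolving set of matching size. From $\Gamma = K_{2^r}\vee(K_{n-2^r}\cup\frac{n}{2^r}K_{2^r})$ one sees that $\Gamma$ has diameter $2$: since every vertex of $\Omega_1$ is adjacent to all others, any two non-adjacent vertices are joined through $\Omega_1$. Explicitly, $d(u,v)=1$ when $u,v$ lie in a common part ($\Omega_1$, $\Omega_2$, or a single block $B_i$) or when one of them lies in $\Omega_1$, while $d(u,v)=2$ exactly when $u\in\Omega_2,\ v\in\Omega_3$ or when $u,v$ lie in distinct blocks. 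Computing neighborhoods, $N[v]=V(\Gamma)$ for $v\in\Omega_1$, $N[v]=\Omega_1\cup\Omega_2$ for $v\in\Omega_2$, and $N[v]=\Omega_1\cup B_i$ for $v\in B_i$, so all vertices inside each of $\Omega_1$, $\Omega_2$, and each $B_i$ are (closed) twins.

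Next I would split on $|\Omega_1|$. When $r\ge 1$ (so $|\Omega_1|=2^r\ge 2$), the maximal twin-sets are $\Omega_1$ of size $2^r$, $\Omega_2$ of size $n-2^r$, and the $\frac{n}{2^r}$ blocks $B_i$ each of size $2^r$, and these are genuinely distinct since their closed neighborhoods differ. When $r=0$ (so $|\Omega_1|=1$) the picture changes: each block is a single vertex $(g,-1)$ whose only neighbor is the unique central vertex $z$, so every vertex of $\Omega_3$ has open neighborhood $\{z\}$ and hence all $n$ of them form one \emph{open}-twin set of size $n$; together with the closed-twin set $\Omega_2$ of size $n-1$ (the singleton $\Omega_1$ yielding no twin-set). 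Before applying the bound I would record that, $D(G)$ being non-abelian, $2^r<n$, that the fixed-point-free involution $g\mapsto g^{-1}$ on $\{g\in G:g^2\ne e\}$ forces $|\Omega_2|=n-2^r$ to be even and hence $\ge 2$, and that $n\ge 3$ when $r=0$.

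Applying Remark~\ref{s4r1} to these pairwise disjoint twin-sets, every resolving set contains at least $|U|-1$ vertices of each, so $\beta(\Gamma)\ge (2^r-1)+(n-2^r-1)+\frac{n}{2^r}(2^r-1)=2n-\frac{n}{2^r}-2$ when $r\ge 1$, and $\beta(\Gamma)\ge (n-2)+(n-1)=2n-3$ when $r=0$. For the reverse inequality I would delete exactly one vertex from each twin-set to form $W$, whose size equals the respective bound; every vertex of $W$ is trivially resolved, so only the deleted vertices must be separated. Using the distance-$2$ description: the deleted central vertex sits at distance $1$ from all landmarks, a deleted $\Omega_2$-vertex is at distance $1$ precisely from the $\Omega_1$- and $\Omega_2$-landmarks and $2$ from all block landmarks, and a deleted block vertex $c_i$ is at distance $1$ precisely from the $\Omega_1$- and $B_i$-landmarks. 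Because $|\Omega_2|\ge 2$ leaves a surviving $\Omega_2$-landmark and each block retains $2^r-1\ge 1$ landmark when $r\ge 1$, these vectors are pairwise distinct, so $W$ resolves $\Gamma$; the $r=0$ case is identical with the block landmarks replaced by the $n-1\ge 1$ surviving $\Omega_3$-landmarks.

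The delicate point — and the real obstacle — is the twin analysis in the degenerate case $|\Omega_1|=1$. There the blocks collapse to single vertices, and it is the \emph{open}-twin relation (common neighborhood $\{z\}$) rather than the closed-twin relation that fuses all of $\Omega_3$ into one twin-set of size $n$. This reshaping of the twin-sets, rather than any hard distance computation, is exactly what converts the count from $2n-\frac{n}{2^r}-2$ into $2n-3$, so the crux is diagnosing it correctly and then confirming $|\Omega_2|\ge 2$ (and $n\ge 3$) so that the deleted-vertex construction really does resolve $\Gamma$.
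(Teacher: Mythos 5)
Your proposal is correct and follows essentially the same route as the paper: identify $\Omega_1$, $\Omega_2$ and the blocks $B_i$ as the closed-twin-sets when $|\Omega_1|\ge 2$, observe that for $|\Omega_1|=1$ the blocks collapse to singletons and all of $\Omega_3$ fuses into a single open-twin-set alongside $\Omega_2$, and apply Remark~\ref{s4r1} with a case split on $|\Omega_1|$. The one substantive difference is to your credit: Remark~\ref{s4r1} only furnishes the lower bound, and the paper concludes equality directly from it, whereas you explicitly verify the matching upper bound by checking that deleting one vertex from each twin-set yields a resolving set (using $|\Omega_2|\ge 2$, via the fixed-point-free involution $g\mapsto g^{-1}$, and the surviving block or $\Omega_3$ landmarks to separate the distance vectors), thereby making rigorous a step the paper leaves implicit.
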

\begin{proof}
Assume that $|\Omega_1|=2^r \geq 2$. Note that $N[u]=N[v]=D(G) \, \, \forall \, u,v \in \Omega_1$ and there does not exist  $w \in D(G)  \backslash \Omega_1$ such that $N[w]=D(G)$. Hence $\Omega_1$ is a twin-set in $\Gamma$ with $|\Omega_1|\geq 2$. Further $N[u]=N[v]= \Omega_1 \cup \Omega_2 \, \, \forall \, u,v \in \Omega_2$ and there does not exist  $w \in D(G)  \backslash \Omega_2$ such that $N[w]= \Omega_1 \cup \Omega_2$. Hence $\Omega_2$ is a twin-set in $\Gamma$ with $|\Omega_2| \geq 2$ as $|\Omega_2| \geq |\Omega_1|$. Similarly each block $B_i(1 \leq i \leq \frac{n}{2^r})$ is a twin-set as $N[u]=N[v]=\Omega_1 \cup B_i \, \, \forall \, u,v \in B_i(1 \leq i \leq \frac{n}{2^r})$ with $|B_i| \geq 2$. Hence all twin-sets in $D(G)$ with cardinality greater than or equal to $2$ are $\Omega_1 , \Omega_2, B_1, B,_2, \cdots B_{\frac{n}{2^r}}$. Therefore, by Remark \ref{s4r1}, $\beta (\Gamma)=2n-\frac{n}{2^r}-2$.

\vspace{0.2 cm}

Now assume that $\Omega_1=1$. One can note that $\Omega_2$ is a twin-set with $|\Omega_2| \geq 2$ and $\Omega_3$ is also a twin-set as $N(u)=N(v)=\Omega_1 \, \forall \, u,v \in \, \Omega_3$. Therefore, there are two twin-sets in this case. Hence $\beta (\Gamma)= |\Omega_2|-1 +|\Omega_3|-1 = (n-1)-1+n-1=2n-3$.
\end{proof}

Now, we find the resolving polynomial $\beta(\Gamma,x)$ of the graph $\Gamma$.

\begin{thm}\label{s4t2}
Let $\Gamma= \mathfrak {C}(D(G),D(G))$ be the commuting graph with $|\Omega_1|=1$. Then
 \begin{equation*}
 \beta (\Gamma,x)= x^{2n-3}(x^3 + 2nx^2 +(n^2+n-1)x+n(n-1))
 \end{equation*}
\end{thm}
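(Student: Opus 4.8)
The plan is to determine $s_i$, the number of resolving sets of $\Gamma$ of cardinality $i$, for each $i$ between $\beta(\Gamma)=2n-3$ and $|V(\Gamma)|=2n$, and then assemble $\beta(\Gamma,x)=\sum_i s_ix^i$. Since $|\Omega_1|=1$ forces $r=0$, I would first record the shape of $\Gamma$. Writing $\Omega_1=\{z\}$, the vertex $z$ is adjacent to each of the remaining $2n-1$ vertices, $\Omega_2$ induces a clique on its $n-1$ vertices, and $\Omega_3$ is an independent set of $n$ vertices each adjacent only to $z$ (here each block $B_i$ is a single vertex). In particular $\Gamma$ has diameter $2$, so all distances lie in $\{0,1,2\}$; hence a pair $u,v$ with $u,v\notin W$ is unresolved by $W$ exactly when no landmark of $W$ is adjacent to precisely one of $u,v$, whereas any pair meeting $W$ is resolved automatically.

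The core step is to characterize resolving sets. Applying Remark \ref{s4r1} to the twin-sets $\Omega_2$ (size $n-1$) and $\Omega_3$ (size $n$), every resolving set $W$ satisfies $|W\cap\Omega_2|\ge n-2$ and $|W\cap\Omega_3|\ge n-1$. I would then show that these inequalities are also sufficient. Under them at most one vertex of $\Omega_2$, at most one vertex of $\Omega_3$, and possibly $z$ lie outside $W$, so the only pairs left to separate are a pair $\{u,v\}$ with $u\in\Omega_2$, $v\in\Omega_3$, and pairs involving $z$. For the former, any landmark in $\Omega_2\cap W$ (which is nonempty since $n-2\ge1$) is adjacent to $u$ but not to $v$; for a pair involving $z$, every landmark is adjacent to $z$, while the other vertex is non-adjacent to some landmark because $W$ meets both $\Omega_2$ and $\Omega_3$ in many vertices. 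Thus $W$ is resolving if and only if $|W\cap\Omega_2|\ge n-2$ and $|W\cap\Omega_3|\ge n-1$, with the membership of $z$ left entirely free.

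With this characterization the count factorizes. The set $W\cap\Omega_2$ is either all of $\Omega_2$ ($1$ choice, contributing $x^{n-1}$) or $\Omega_2$ minus one vertex ($n-1$ choices, contributing $(n-1)x^{n-2}$); the set $W\cap\Omega_3$ is either all of $\Omega_3$ ($1$ choice, $x^{n}$) or $\Omega_3$ minus one vertex ($n$ choices, $nx^{n-1}$); and $z\in W$ or not contributes a factor $1+x$. As these three choices are independent,
\[
\beta(\Gamma,x)=\bigl(x^{n-1}+(n-1)x^{n-2}\bigr)\bigl(x^{n}+nx^{n-1}\bigr)(1+x),
\]
and expanding then extracting $x^{2n-3}$ yields the stated expression.

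The main obstacle is the sufficiency half of the characterization: the two lower bounds come for free from Remark \ref{s4r1}, but one must confirm that once they hold, every surviving pair—one vertex in $\Omega_2$ together with one in $\Omega_3$, or a pair involving the centre $z$—is genuinely separated by a landmark. The enumeration afterwards is routine bookkeeping, the only points requiring care being to treat the centre's membership as a free binary choice (the factor $1+x$) and to verify that the least degree $2n-3$ reproduces $\beta(\Gamma)$ from Theorem \ref{s4t1}.
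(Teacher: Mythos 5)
Your proposal is correct, and it arrives at the same coefficients by the same underlying twin-set combinatorics, but it is organized differently and is actually more complete than the paper's own argument. The paper proves only the necessity direction: it invokes Remark \ref{s4r1} to force at least $|\Omega_2|-1$ vertices from $\Omega_2$ and $|\Omega_3|-1$ from $\Omega_3$, then simply \emph{counts} such sets case by case (computing $s_{2n-3}$ and $s_{2n-2}$ by hand and citing \cite{ali} for $s_{2n-1}=2n$ and $s_{2n}=1$), never verifying that the sets so counted are in fact resolving. You supply exactly this missing sufficiency half: using that $|\Omega_1|=1$ forces $r=0$, you identify $\Gamma$ as a diameter-$2$ graph (a universal vertex $z$, a clique $\Omega_2$ of size $n-1$, and an independent set $\Omega_3$ of size $n$), reduce resolution to the adjacency criterion, and check the three surviving pair types directly, obtaining the clean characterization that $W$ resolves $\Gamma$ if and only if $|W\cap\Omega_2|\ge n-2$ and $|W\cap\Omega_3|\ge n-1$, with $z$ free. (The only unstated point is that $n\ge 3$, which holds since $r=0$ and $D(G)$ is non-abelian, so $W\cap\Omega_2\ne\emptyset$ as needed.) Your second departure is the packaging: rather than computing each $s_i$ separately, you factor the whole resolving polynomial as $\bigl(x^{n-1}+(n-1)x^{n-2}\bigr)\bigl(x^{n}+nx^{n-1}\bigr)(1+x)$, whose expansion indeed gives $x^{2n}+2nx^{2n-1}+(n^2+n-1)x^{2n-2}+n(n-1)x^{2n-3}$, recovering all four coefficients uniformly, without the external citation, and with the least degree $2n-3$ confirming Theorem \ref{s4t1}. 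The paper's route is shorter on the page; yours is self-contained and logically airtight.
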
 
\begin{proof}
By Theorem \ref{s4t1}, $\beta (\Gamma,x)=2n-3$. In order to find the resolving polynomial, we need to calculate $s_{2n-3},s_{2n-2},s_{2n-1},s_{2n}$. By \cite[Proposition 3.5, p. 2399]{ali}, $s_{2n-1}=2n$ and $s_{2n}=1$. 

\vspace{0.2 cm}

The sets $\Omega_1, \Omega_2$ and $\Omega_3$ are mutually disjoint out of which $\Omega_2$ and $\Omega_3$ are twins set. There for we have to pick $|\Omega_2|-1$ vertices from $\Omega_2$ and $|\Omega_3|-1$ vertices from $\Omega_3$ and no vertex from $\Omega_1$. Therefore, the total number of choices of resolving set for $\Gamma$ of cardinality $2n-3$ is
\begin{equation*}
s_{2n-3}={^1C_0} \times {^{n-1}C_{n-2}} \times {^{n}C_{n-1}}=(n-1)n=n(n-1)
\end{equation*}

Now, we calculate ${s_{2n-2}}$. In this case, we have to choose one more vertices than $2n-3$. This may be from $\Omega_1,\Omega_2$ or $\Omega_3$. Therefore, the total number of choices of resolving sets of cardinality $2n-2$ is
\begin{equation*}
s_{2n-2}={^1C_1} \times {^{n-1}C_{n-2}} \times {^{n}C_{n-1}}+{^1C_0} \times {^{n-1}C_{n-1}} \times {^{n}C_{n-1}}+{^1C_0} \times {^{n-1}C_{n-2}} \times {^{n}C_{n}}
\end{equation*}
\begin{equation*}
\implies s_{2n-2}=(n-1)n+n+n-1=n^2+n-1
\end{equation*}
\end{proof}
 
\begin{thm}\label{s4t3}
Let $\Gamma= \mathfrak {C}(D(G),D(G))$ be the commuting graph with $|\Omega_1| \geq 2 $. Then
\[
\beta(\Gamma,x)=x^{2n- \frac{n}{2^{r}}-2} \Bigg((n-2^r)(2^r)^{\frac{n}{2^{r}}+1} + \sum_{i=2n-\frac{n}{2^r}-1}^{2n-2}s_{i}x^{i}+2nx^{\frac{n}{2^r}+1}+x^{\frac{n}{2^{r}}+2}\Bigg)
\]
where\\
\begin{equation*}
s_i=(n-2^r)(2^r)^{2n-i-1} \times {^{\frac{n}{2^r}+1}C_{2n-i-1}} + (2^r)^{2n-i} \times {^{\frac{n}{2^r}+1}C_{2n-i}}
\end{equation*}
\end{thm}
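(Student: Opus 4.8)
The plan is to characterise the resolving sets of $\Gamma$ completely and then read off $\beta(\Gamma,x)$ as a product over the twin-classes. By Corollary \ref{s3p1c1} we have $\Gamma = K_{2^r}\vee\bigl(K_{n-2^r}\cup\frac{n}{2^r}K_{2^r}\bigr)$, so the first step is to record the distances in $\Gamma$: every vertex of $\Omega_1$ is universal (distance $1$ to all others); two vertices lying in a common clique---either both in $\Omega_2$ or both in a single block $B_i$---are at distance $1$; and the distance from a vertex of $\Omega_2$ to a vertex of any block, as well as between two distinct blocks, is $2$, realised through $\Omega_1$.

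The central claim I would establish is that $W\subseteq D(G)$ is a resolving set for $\Gamma$ if and only if $W$ contains at least $|U|-1$ vertices of each twin-class $U\in\{\Omega_1,\Omega_2,B_1,\dots,B_{n/2^r}\}$. The necessity is immediate from Remark \ref{s4r1}: the proof of Theorem \ref{s4t1} shows these are exactly the twin-sets of $\Gamma$ of size at least $2$, and each indeed has size $\geq 2$, since $2^r\mid n$ together with the non-abelianness of $D(G)$ forces $n\geq 2^{r+1}$, whence $|\Omega_2|=n-2^r\geq 2^r\geq 2$ and $|B_i|=2^r\geq 2$.

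The main obstacle is the sufficiency, because in a general graph the twin condition is only necessary: I must verify that once $W$ omits at most one vertex from every twin-class it separates all remaining pairs. If $u\neq v$ and at least one of them lies in $W$, that vertex is a landmark at distance $0$ from itself and positive from the other, so the pair is resolved; hence it suffices to treat $u,v\notin W$. Necessity then forces $u$ and $v$ into distinct twin-classes, and since each class has $|U|-1\geq 1$ of its vertices in $W$ there is a landmark available inside every class. A short case analysis over the four types of cross-class pairs, namely $(\Omega_1,\Omega_2)$, $(\Omega_1,B_i)$, $(\Omega_2,B_i)$ and $(B_i,B_j)$ with $i\neq j$, then produces in each case a landmark of $W$ at distance $1$ from one endpoint and $2$ from the other, using the distances recorded above; this is precisely where the geometry of $\Gamma$ is needed.

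With the characterisation in place the count factorises, since the twin-classes are pairwise disjoint and partition $D(G)$. For a class $U$ of size $s$ the admissible intersections $W\cap U$ are those of size $s$ (one way) or $s-1$ ($s$ ways), contributing the factor $x^{s}+s\,x^{s-1}=x^{s-1}(x+s)$. Multiplying over $\Omega_1$ (size $2^r$), $\Omega_2$ (size $n-2^r$) and the $\frac{n}{2^r}$ blocks (each of size $2^r$) yields
\[
\beta(\Gamma,x)=x^{2n-\frac{n}{2^r}-2}\,(x+n-2^r)\,(x+2^r)^{\frac{n}{2^r}+1},
\]
the leading monomial exponent $\sum_U(|U|-1)=2n-\frac{n}{2^r}-2$ agreeing with $\beta(\Gamma)$ of Theorem \ref{s4t1}. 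Finally I would expand this product: the constant term of its polynomial factor is $(n-2^r)(2^r)^{\frac{n}{2^r}+1}$, the two top coefficients are $1$ and $(n-2^r)+(\frac{n}{2^r}+1)2^r=2n$, and a generic coefficient of $x^{j}$ in $(x+n-2^r)(x+2^r)^{m+1}$ with $m=\frac{n}{2^r}$ equals $\binom{m+1}{j-1}(2^r)^{m+2-j}+(n-2^r)\binom{m+1}{j}(2^r)^{m+1-j}$; substituting $j=i-(2n-\frac{n}{2^r}-2)$ turns this into the displayed $s_i$. The only delicate point in this last step is the bookkeeping of binomial indices, which is routine.
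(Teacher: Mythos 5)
Your proposal is correct and reaches exactly the paper's coefficients, but it takes a genuinely different and in fact more complete route. The paper's proof is a direct cardinality-by-cardinality count: having identified the twin-sets $\Omega_1,\Omega_2,B_1,\dots,B_{\frac{n}{2^r}}$, it computes $s_{2n-\frac{n}{2^r}-2}$ and then $s_{2n-\frac{n}{2^r}-2+t}$ as sums of products of binomial coefficients, splitting according to whether the $t$ extra vertices complete $\Omega_2$ or some of the $\frac{n}{2^r}+1$ classes of size $2^r$, and it imports $s_{2n-1}=2n$ and $s_{2n}=1$ from \cite[Proposition 3.5]{ali}. Crucially, the paper invokes Remark \ref{s4r1} only for the \emph{necessity} of the twin condition (every resolving set omits at most one vertex per twin-class) and never verifies \emph{sufficiency}; without sufficiency its counts are a priori only upper bounds for the $s_i$ (and even $\beta(\Gamma)$ in Theorem \ref{s4t1} is only bounded below). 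Your proposal explicitly isolates and fills this gap: the distance table ($\Omega_1$ universal, distance $1$ within $\Omega_2$ and within each $B_i$, distance $2$ across), the four-case landmark analysis, and the size estimates ($2^r\mid n$ and non-abelianness give $n\geq 2^{r+1}$, so each class has size at least $2$ and meets any admissible $W$, noting $r\geq 1$ here) are all correct and are precisely what is needed. Your second departure is the packaging: once the characterisation holds, the count factorises over the partition into twin-classes as $\prod_U x^{|U|-1}(x+|U|)=x^{2n-\frac{n}{2^r}-2}(x+n-2^r)(x+2^r)^{\frac{n}{2^r}+1}$, and expanding with $m=\frac{n}{2^r}$ gives the coefficient of $x^{j}$ as $\binom{m+1}{j-1}(2^r)^{m+2-j}+(n-2^r)\binom{m+1}{j}(2^r)^{m+1-j}$, which under $j=i-(2n-\frac{n}{2^r}-2)$ and the symmetry ${^{m+1}C_j}={^{m+1}C_{m+1-j}}$ reproduces the paper's $s_i$ verbatim; this also delivers $s_{2n-1}=(m+1)2^r+(n-2^r)=2n$ and $s_{2n}=1$ without citing \cite{ali}. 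What each approach buys: the paper's count exhibits each $s_i$ transparently as a two-term binomial sum, while your factored closed form is more compact, closes the sufficiency gap the paper (following \cite{ali}) leaves implicit, and incidentally clarifies the indexing of the theorem's displayed formula, where the $s_i x^i$ terms appear inside the factored-out power $x^{2n-\frac{n}{2^r}-2}$ without the exponent shift your product form makes unambiguous.
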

\begin{proof} We will calculate $s_{j}$, where $j=2n-\frac{n}{2^r}-2$ and $s_i \, \, (2n-\frac{n}{2^r}-1 \leq i \leq 2n-2)$.

\vspace{0.2 cm}
 
We first calculate $s_{j}$, $j=2n-\frac{n}{2^r}-2$. Since $|\Omega_1| \geq 2$, all the twin-sets in $D(G)$ with cardinality greater than or equal to $2$ are $\Omega_2 , \Omega_1, B_1, B,_2, \cdots B_{\frac{n}{2^r}}$. We have to choose all vertices except any one vertex from each of twin-sets. Therefore, the total ways to form such a resolving set is
\begin{equation*}
s_j={^{n-2^r}C_{n-2^{r}-1}} \times {^{2^r}C_{2^r-1}} \times \underbrace {{^{2^r}C_{2^r-1}} \times \cdots {^{2^r}C_{2^r-1}}}_{\frac{n}{2^r}-\text{times}} 
\end{equation*}
\[
s_j=(n-2^r)\times 2^r \times \underbrace { 2^r \cdots \times 2^r}_{\frac{n}{2^r}-\text{times}}=(n-2^{r}) \bigg(2^r\bigg)^{\frac{n}{2^{r}}+1}
\]

Now, we calculate $s_i \, \, \,(2n-\frac{n}{2^r}-1 \leq i \leq 2n-2)$. Let us choose a resolving set of cardinality $2n-\frac{n}{2^r}-2+t$ with $1 \leq t \leq \frac{n}{2^r}$, that is, we choose $t$ more vertices than $\beta(\Gamma)=2n-\frac{n}{2^r}-2$. Observe that except $\Omega_2$, all the twin-sets $\Omega_1, B_1, B,_2, \cdots B_{\frac{n}{2^r}}$ have cardinality $2^r$. We have to choose $t$  more vertices. First, choose one of $t$ vertices from $\Omega_2$ and the rest from $\bigg(\bigcup_{j=1}^{\frac{n}{2^r}}B_j \bigg) \cup \Omega_1$ and then choose all from $\bigg(\bigcup_{j=1}^{\frac{n}{2^r}}B_j \bigg) \cup \Omega_1$. Therefore, the total choices are
\[
s_{2n-\frac{n}{2^r}-2+t}= {^{n-2^r}C_{n-2^r}} \times \Bigg ( \bigg( \underbrace {{^{2^r}C_{2^r}} \times \cdots \times {^{2^r}C_{2^r}}}_{(t-1) \text{times}}    \bigg)
\]
\begin{flushright}
$\times \bigg ( \underbrace {{^{2^{r}}C_{2^{r}-1}} \times \cdots \times {^{2^{r}}C_{2^{r}-1}}}_{\frac{n}{2^{r}}-t + 2} \bigg)
\times \bigg ( {^{\frac{n}{2^{r}}+1}C_{t-1}}\bigg )   \Bigg )$ 
\end{flushright}
\begin{flushright} $ + \quad           {^{n-2^r}C_{n-2^r-1}} \times \Bigg ( \bigg( \underbrace {{^{2^r}C_{2^r}} \times \cdots \times {^{2^r}C_{2^r}}}_{t  \, \, \text{times}}    \bigg)$
\end{flushright}
\begin{flushright}
$\times 
\bigg( \underbrace {{^{2^r}C_{2^r-1}} \times \cdots \times {^{2^r}C_{2^r-1}}}_{(\frac{n}{2^{r}}-t+1) \, \text{times}}    \bigg)   \times \bigg ( {^{\frac{n}{2^{r}}+1}C_{t}}\bigg )   \Bigg )$
\end{flushright}
\[
s_{2n-\frac{n}{2^r}-2+t}={(2^{r})}^{\frac{n}{2^{r}}-t+2} \times {^{\frac{n}{2^r}+1}C_{t-1}} + (n-{2^r}) \times {{(2^r)}^{\frac{n}{2^r}-t+1}} \times {^{\frac{n}{2^r}+1}C_{t}}
\]
Suppose $2n-\frac{n}{2^{r}}-2+t=i$. This implies $t=i-2n+\frac{n}{2^{r}}+2$ and $2n-\frac{n}{2^r}-1 \leq i \leq 2n-2$. Therefore, the total ways of choosing resolving set of cardinality $i$ is

\[
s_i={(2^{r})}^{\frac{n}{2^{r}}-i+2n-\frac{n}{2^{r}}-2+2} \times {^{\frac{n}{2^{r}}+1}C_{i-2n+\frac{n}{2^{r}}+2-1}} + (n-2^{r}) \times {(2^{r})}^{\frac{n}{2^{r}}-i+2n-\frac{n}{2^{r}}-2+1}
\]

\begin{flushright}		
$\times {^{\frac{n}{2^{r}}+1}C_{i-2n+\frac{n}{2^{r}}+2}}$
\end{flushright}

\[
s_i={(2^r)}^{2n-i} \times {^{\frac{n}{2^{r}}+1}C_{i-2n+\frac{n}{2^{r}}+1}} + (n-2^{r}) \times {(2^{r})}^{2n-i-1} \times  {^{\frac{n}{2^{r}}+1}C_{i-2n+\frac{n}{2^{r}}+2}}
\]

\begin{flushleft}
Since ${^{n}C_r}={^{n}C_{n-r}}$,
\end{flushleft}
\[
s_i={(2^r)}^{2n-i} \times {^{\frac{n}{2^{r}}+1}C_{2n-i}} + (n-2^{r}) \times {(2^{r})}^{2n-i-1} \times  {^{\frac{n}{2^{r}}+1}C_{2n-i-1}}.
\]
\end{proof}
%\newpage


\begin{thebibliography}{}
%\bibitem{abd} Abdollahi, A., Akbary, S., Maimani, H.R., (2006). Non-commuting graph of a group, J. Algebra, 468-492.

\bibitem{ab} Abdollahi, A., (2008) Commuting graphs of full matrix rings over finite fields. Linear
Algebra and its Applications 428, 2947–2954.

\bibitem{ak} Akbari, S., Bidkhori H., Mohammadian A.(2008). Commuting graphs of matrix alge-
bras, Comm. Algebra, 4020-4031.

\bibitem{ali} Ali F., Salman M., Huang S. (2016). On the Commuting
Graph of Dihedral Group. Communications in Algebra, 44:6, 2389-2401

\bibitem{bb} Brown B., Generalized Dihedral Groups of Small Order,  Master Thesis  University of Mary Washington (2010).
\bibitem{ch} Chartrand G., Zhang, P. (2006). Itroduction to Graph Theory. New York: Tata
McGraw-Hill Companies Inc.

\bibitem{che} Chelvam, T. T., Selvakumar, K., Raja, S. (2011). Commuting graph on dihedral group.
The J. Math. and Comp. Sci. 2(2):402-406.

%\bibitem{chh} Chartrand G., Eroh, L., Johnson, M. A., Oellermann, O. R. (2000). Resolvability in graphs and the metric dimension of a graph. Disc. Appl. Math. 105:99-113.

%\bibitem{chaudhary} Chaudhry, M. A., Javaid, I., Salman, M. (2010). Fault-Tolerant metric and partition dimension of graphs. Util. Math. 83:187-199.

\bibitem{ir} Iranmanesh A., Jafarzadeh A., (2008). On the commuting graph associated with the
symmetric and alternating groups. J. Alg. Appl., 129-146.

%\bibitem{raz} Raza, Z., Faizi S.(2013) Non-commuting graph of finitely presented group, Sci. Int.(Lahore), 883-885.

\bibitem{razf} Raza, Z., Faizi S.(2013). Commuting Graphs Of Dihedral Type Groups. Applied Mathematics E-Notes, 221-227.



%\bibitem{har} Harary, F., Melter, R. A. (1976). On the metric dimension of a graph. Ars Combin. 2:191-195.

%\bibitem{her} Hernando, C., Mora, M., Pelayo, I. M., Seera, C., Wood, D. R. (2010). Extremal graph theory for metric dimension and diameter. The Elec. J. of Combin. 17.


%\bibitem{javed} Javaid, I., Salman, M., Chaudhry, M. A., Shokat, S. (2009). Fault-Tolerance in Resolvability. Util. Math 80:263-275.
\bibitem{seg1} Segev Y. (2001) The commuting graph of minimal nonsolvable groups. Geometriae Dedicata, 55-66.

\bibitem{seg} Segev Y.,Seitz G.(2002). Anisotropic groups of type An and the commuting graph
of finite simple groups. Pacific J. Math., 125-225.


%\bibitem{asc} Aschbacher M.(1973), A condition for the existence of a strongly embedded subgroup. Proc. Amer. Math. Soc., 509-511.


%\bibitem{salman} Salman, M., Javaid, I., Chaudhry, M. A. (2012). Resolvability in circulant graphs. Acta. Math. Sinica, Englich Series 28(9):1851-1864.



\end{thebibliography}
\end{document}